\newtheorem{theorem}{Theorem}
\newtheorem*{theoremnn}{Theorem}
\newtheorem{corollary}[theorem]{Corollary}
\newtheorem{lemma}[theorem]{Lemma}
\newtheorem{proposition}[theorem]{Proposition}
\newtheorem{definition}[theorem]{Definition}
\newcommand{\RR}{\mathbb{R}}
\newcommand{\norm}[1]{{\|{#1}\|}}
\newcommand{\R}{\mathbb{R}}
\newcommand{\N}{\mathbb{N}}
\newcommand{\wo}{W_{0}}
\newcommand{\GX}{\hbox{$[G\curvearrowright X]$}}
\newcommand{\GY}{\hbox{$[G\curvearrowright Y]$}}
\newcommand{\ev}{\mathrm{ev}}
\newcommand{\pt}{\{*\}}
\thanks{2010 \emph{Mathematics Subject Classification.} Primary 43A07; Secondary 37A15, 58E40, 46L55}
\thanks{JB, GAN and NJW were partially supported by EPSRC grant  EP/F031947/1. 
PN was partially supported by NSF grant DMS-0900874}
\title{A homological characterization of topological amenability}
\author{Jacek Brodzki}
\address{School of Mathematics, University of Southampton, Highfield, Southampton, SO17 1SH, England}
\email{J.Brodzki@soton.ac.uk}
\author{Graham A. Niblo}
\address{School of Mathematics, University of Southampton, Highfield, Southampton, SO17 1SH, England}
\email{G.A.Niblo@soton.ac.uk}
\author{Piotr W. Nowak}
\address{Department of Mathematics, Texas A\& M University, College Station, TX 77843}
\email{pnowak@math.tamu.edu}
\author{Nick Wright}
\address{School of Mathematics, University of Southampton, Highfield, Southampton, SO17 1SH, England}
\email{N.J.Wright@soton.ac.uk}
\begin{document}

\begin{abstract}
Generalizing Block and Weinberger's characterization of amenability we introduce the notion of uniformly finite homology for a group action on a compact space and use it to give a homological characterization of topological amenability for actions. By considering the case of the natural action of $G$ on its Stone-\v Cech compactification we obtain a homological characterization of exactness of the group.
\end{abstract}

\maketitle

There are two well known homological characterizations of amenability for a countable discrete group $G$.  One, given by Johnson \cite{Johnson}, states that a group is amenable if and only if
a certain cohomology class in the first bounded cohomology $H^1_b(G,\ell_0^1(G)^{**})$ vanishes,
where $\ell_0^1(G)$ is the augmentation ideal. By contrast Block and Weinberger \cite{block-weinberger} described amenability in terms of the non-vanishing
of a homology class in the $0$-dimensional uniformly finite homology of $G$, 
$H_0^{uf}(G,\mathbb{R})$.
The relationship between these characterizations is explored in \cite{bnw}.

Amenable actions on a compact space were extensively studied by Anantharaman-Delaroche and Renault in \cite{AD-R} as a generalization of amenability which is sufficiently strong for applications and yet is exhibited by almost all known groups. A group is amenable if and only if the action on a point is amenable and it is exact  if and only if it acts amenably on its Stone-\v Cech compactification, $\beta G$, \cite{HR,guentner-kaminker,ozawa}. It is natural to consider the question of whether or not the Johnson and Block-Weinberger characterizations of amenability can be generalized to this much broader context. In particular Higson asked for such a characterization of exactness.

In \cite{bnnw} we showed how to generalize Johnson's result in terms of bounded cohomology with coefficients in a specific module $N_0(G, X)^{**}$ associated to the action. In this paper we turn our attention to the Block-Weinberger theorem, studying a related module $W_0(G,X)$ (the \emph{standard module of the action}),  and define the \emph{uniformly finite homology of the action}, ${H}^{uf}_*(G \curvearrowright X)$  as the group homology with coefficients in $W_0(G, X)^*$. The modules $N_0(G,X)^{**}$ and $W_0(G,X)^*$ should be thought of as analogues of the modules $(\ell^{\infty}(G)/\RR)^*$ and $\ell^{\infty}(G)$ respectively, which play a key role in the definition of the uniformly finite homology for groups. The two characterizations are intimately related, and we consider this relationship in section \ref{pairing}.

In the case of Block and Weinberger's uniformly finite homology the vanishing of the $0$-dimensional homology
group is equivalent to vanishing of a fundamental class $[\sum\limits_{g\in G}g]\in H_0^{uf}(G,\RR)$, however the homology group ${H}^{uf}_0(G \curvearrowright X)$ is rarely trivial even when the action is topologically non-amenable. Indeed if $X$ is a compactification of $G$ then the homology group is always non-zero, see Theorem  \ref{properaction}  below. A similar phenomenon can be observed for controlled coarse homology 
\cite{nowak-spakula}, which is another generalization of 
uniformly finite homology: only the vanishing of the fundamental class has geometric 
applications. Here we show that topological amenability is detected by a fundamental class $\GX\in {H}^{uf}_0(G \curvearrowright X)$   for the action,  and we obtain a homological characterization of topological amenability generalizing the Block-Weinberger theorem, Theorem \ref{amenable}, which may be summarized as follows:

\begin{theoremnn}\label{homological}
Let $G$ be a finitely generated group acting by homeomorphisms on a compact Hausdorff 
topological space $X$. The action of $G$ on $X$ is topologically amenable if and only if 
the fundamental class $\GX$ is non-zero in ${H}^{uf}_0(G \curvearrowright X)$. 
\end{theoremnn}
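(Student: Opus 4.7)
Proof proposal. The plan is to establish both directions via duality between chains in $H^{uf}_0(G \curvearrowright X) = H_0(G, W_0(G,X)^*)$ and continuous probability-measure-valued functions on $X$. As preparation I would fix a cycle representative of $\GX$ in $W_0(G,X)^*$, namely the canonical $G$-invariant element (the analogue of the constant function $1 \in \ell^\infty(G)$), and view the claim that $\GX = 0$ in $H^{uf}_0$ as the claim that this representative lies in the linear span of elements $g\xi - \xi$ for $\xi \in W_0(G,X)^*$ and $g \in G$.

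For the ``topologically amenable implies $\GX \neq 0$'' direction, I would use the definition of amenability of the action via continuous almost-invariant means: there exist continuous $\mu_n \colon X \to \mathrm{Prob}(G)$ with $\sup_{x \in X}\|g \cdot \mu_n(g^{-1}x) - \mu_n(x)\|_1 \to 0$ for every $g \in G$. Each $\mu_n$ naturally lies in the predual $C(X)\otimes \ell^1(G)$ and hence defines a bounded linear functional $\Phi_n$ on $W_0(G,X)^*$ by canonical pairing. A weak-$*$ ultralimit $\Phi = \lim_\omega \Phi_n \in W_0(G,X)^{**}$ inherits $G$-invariance from the almost-equivariance of $\mu_n$, and so annihilates every boundary $g\xi - \xi$. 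A direct computation from the definitions should yield $\Phi(\GX) = 1$, so $\GX \neq 0$ in $H^{uf}_0(G \curvearrowright X)$.

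For the converse, a first pass via Hahn-Banach shows that if no $G$-invariant functional on $W_0(G,X)^*$ detects the representative of $\GX$, then $\GX$ at least lies in the norm closure of the space of boundaries in $W_0(G,X)^*$. Promoting this to an actual boundary expression is the content of a Tarski-type statement. I would try to show that failure of topological amenability produces, for some finite $F \subset G$, a ``continuous paradoxical decomposition'' of the action: a finite open cover $\{U_i\}$ of $X$ together with elements $g_i \in F$ implementing a locally paradoxical partition of $X \times G$ uniformly in $x \in X$. Combining this with a continuous partition of unity subordinate to $\{U_i\}$ allows one to assemble explicit elements $\eta_i \in W_0(G,X)^*$ with $\sum_i (g_i \eta_i - \eta_i)$ equal to the representative of $\GX$, witnessing vanishing in $H^{uf}_0$.

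The main obstacle is this last step: producing an actual boundary, not merely a norm-closure boundary, in the non-amenable case. This requires a continuous, fibrewise parametrised version of Tarski's paradoxical decomposition, engineered compatibly with the $C(X)$-module structure on $W_0(G,X)^*$. As a fallback one can work with a reduced version of $H^{uf}_0$ (quotienting by the \emph{closed} span of boundaries); the Hahn-Banach argument gives the characterization for reduced homology immediately, and one then needs a separate closed-range type argument showing that vanishing of $\GX$ in the unreduced and reduced homologies coincides for this particular class.
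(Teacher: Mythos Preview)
Your forward direction (amenable $\Rightarrow \GX \neq 0$) is essentially the paper's argument: a weak-$*$ limit of approximately invariant probability-valued maps gives a $G$-invariant $\mu \in W_0(G,X)^{**}$ with $\mu(\sigma)=1$, and since $\mu$ annihilates every $g\psi-\psi$, the representative $\sigma$ cannot be a boundary (nor even a limit of boundaries).

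For the converse you have correctly located the obstacle --- passing from ``$\sigma$ lies in the norm closure of the boundaries'' to ``$\sigma$ is an actual boundary'' --- but both of your proposed resolutions (a continuous Tarski-type paradoxical decomposition, or a separate closed-range argument) are unnecessary and not obviously available in this generality. The paper bypasses the issue with a second use of Hahn--Banach, this time as an \emph{extension} rather than a separation. Non-amenability means there is no sequence $\xi^n \in W_0(G,X)$ with $\sigma(\xi^n)=1$ and $\sup_{s\in S}\|\xi^n - s\cdot\xi^n\|\to 0$; negating this gives a constant $D>0$ with
\[
\sup_{s\in S}\|\xi - s\cdot\xi\| \;\ge\; D\,|\sigma(\xi)|\qquad\text{for all }\xi\in W_0(G,X).
\]
This inequality makes $\psi(\delta\xi):=\sigma(\xi)$ a well-defined \emph{bounded} functional on the range of $\delta$ inside $\bigoplus_{s\in S}W_0(G,X)$; Hahn--Banach extends it to some $\Psi$ on the whole sum, and then $\delta^*\Psi=\sigma$ on the nose. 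So $\sigma$ lies in the image of $\delta^*$, not merely its closure, and $\GX=0$ in the unreduced $H_0^{uf}(G\curvearrowright X)$ without any paradoxical decomposition.
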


When the space $X$ is a point, the uniformly finite homology of the action 
$H_n^{uf}(G\curvearrowright X)$ reduces to $H_n^{uf}(G,\mathbb{R})$,  the uniformly
finite homology of $G$ with real coefficients \cite{block-weinberger}, recovering the characterization
proved by Block and Weinberger.

\section{The uniformly finite homology of an action}\label{sect1}

Let $G$ be a group generated by a \emph{finite} set $S=S^{-1}$, 
acting by homeomorphisms on a compact Hausdorff space $X$.

The space $C(X, \ell^1(G))$ of continuous $\ell^1(G)$ valued functions on $X$ is equipped with the $\sup-\ell^1$ norm
$$\Vert \xi\Vert=\sup\limits_{x\in X}\sum_{g\in G}\vert \xi(x)(g)|.$$

The summation map on $\ell^1(G)$ induces a continuous map $\sigma:C(X, \ell^1(G))\rightarrow C(X)$, where $C(X)$ is equipped with the $\ell^\infty$ norm. The space $N_0(G, X)$ is defined to be the pre-image $\sigma^{-1}(0)$ which we identify as $C(X, \ell^1_0(G))$, while, identifying $\mathbb R$ with the constant functions on $X$ we define $W_0(G,X)$ to be the subspace $N_0(G,X)+\mathbb R = \sigma^{-1}(\mathbb R)$. Restricting $\sigma$ to the subspace $W_0(G,X)$ we can regard it as a map $W_0(G,X)\to \R$, and with this convention we may regard $\sigma$ as an element of the dual space $W_0(G,X)^*$.

Given an element $\xi\in C(X, \ell^1(G))$ we obtain a family of functions $\xi_g\in C(X)$ indexed by the elements of $G$ by setting $\xi_g(x)=\xi(x)(g)$.

In this notation, the Banach space $C(X, \ell^1(G))$ is equipped with a natural action of $G$,
$$(g\cdot \xi)_h=g*\xi_{g^{-1}h},$$
for each $g,h\in G$, where $*$ denotes the translation action of $G$ on $C(X)$:
$g*f(x)=f(g^{-1}x)$ for $f\in C(X)$. We note that with these actions on $C(X, \ell^1(G))$ and $C(X)$, the map $\sigma$ is equivariant which implies that $N_0, W_0$ are $G$-invariant subspaces.

\begin{definition}[\cite{bnnw}]
We call $\wo(G,X)$, with the above action of $G$, the \emph{standard module of the action} of $G$ on $X$.
\end{definition}

We have the following short exact sequence of $G$-modules:
\[
0\longrightarrow N_0(G,X) \xrightarrow{i} W_0(G,X) \xrightarrow{\sigma} \mathbb{R} \longrightarrow 0.
\]

It is also worth pointing out that when $X$ is a point we have 
$W_0(G,X)=\ell^1(G)$ and $N_0(G,X)=\ell^1_0(G)$. The above 
modules and decompositions were introduced, with a slightly different but equivalent description, in \cite{bnnw} for a compact $X$ 
and in \cite{douglas-nowak} in the case when $X=\beta G$, the Stone-\v{C}ech compactification of $G$.

Recall that if $V$ is a $G$-module then $V^*$ is a also a $G$-module with the action of $G$
given by $(g\psi)(\xi)=\psi(g^{-1}\xi)$ for $\psi\in V^*$ and $\xi\in V$. With this definition we introduce the notion of uniformly finite homology for a group action.

\begin{definition}
Let $G$ be a finitely generated group acting by homeomorphisms on a compact space $X$.
We define the \emph{uniformly finite homology of the action} by setting 
$$H_n^{uf}(G\curvearrowright X)=H_n(G,W_0(G,X)^*),$$
for every $n\ge 0$, where $H_n$ denotes group homology.
\end{definition}

A certain homology class in the uniformly finite homology of the action will be of 
particular importance to us.

\begin{definition}
Let $G$ act by homeomorphisms on a compact space $X$. The \emph{fundamental class of 
the action}, denoted $\GX$, is the homology class in ${H}^{uf}_0(G \curvearrowright X)$ 
represented by $\sigma$. 
\end{definition}

As noted above, when $X$ is a point we have $W_0(G,X)=\ell^1(G)$, so $W_0(G,X)^*=\ell^{\infty}(G)$,
 $\GX=[\sum\limits_{g\in G}g]$, and 
$$H_0^{uf}(G,\RR)\simeq H_0(G,\ell^{\infty}(G))\simeq H^{uf}_0(G,W_0(G,\operatorname{pt})^*)={H}^{uf}_0(G \curvearrowright \operatorname{pt}).$$

Consider the dual of the short exact sequence of coefficients above:
\[
0\rightarrow \mathbb R^*\xrightarrow{\sigma^*} W_0(G,X)^*\rightarrow N_0(G,X)^*\rightarrow 0.
\]
The map $\sigma$ is always split as a vector space map, and hence its dual $\sigma^*$ is also split. We now consider the question of when we can split the map $\sigma^*$ equivariantly. Identifying $\R^*$ with $\R$, the map $\sigma^*$ takes $1$ to $\sigma$, hence the condition that $\mu:W_0(G,X)^*\to \mathbb R$ splits $\sigma^*$ is the condition $\mu(\sigma)=1$. Hence a $G$-equivariant splitting of $\sigma^*$ can be regarded as a $G$-invariant functional $\mu\in W_0(G, X)^{**}$ such that $\mu(\sigma)=1$. But this is precisely an invariant mean for the action as described in \cite[definition 13]{bnnw}, so we obtain:

\begin{lemma}
Let $G$ be a group acting by homeomorphisms on a compact Hausdorff space $X$. Then the action is topologically amenable if and only if there is a $G$-equivariant splitting of the map $\sigma^*$ in the short exact sequence 

\begin{equation*}
0\rightarrow \mathbb R^*\xrightarrow{\sigma^*} W_0(G,X)^*\rightarrow N_0(G,X)\rightarrow 0.
\end{equation*}
\end{lemma}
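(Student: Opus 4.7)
The plan is to unwind definitions and reduce the statement to the known equivalence between topological amenability of $G \curvearrowright X$ and the existence of a topological invariant mean in $W_0(G,X)^{**}$ in the sense of \cite[Definition 13]{bnnw}. The paragraph preceding the lemma already sketches this, so my role is to fill in the few missing verifications.

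First I would check that $\sigma$ has a vector-space splitting: the inclusion $\mathbb R \hookrightarrow W_0(G,X)$ as constant functions is a right inverse of $\sigma$, so $\sigma$ is split and hence $\sigma^*$ is automatically a split injection of vector spaces; the only nontrivial question is whether a splitting can be chosen $G$-equivariantly. Under the identification $\mathbb R^* \cong \mathbb R$, the map $\sigma^*$ sends $1$ to the functional $\sigma \in W_0(G,X)^*$. Therefore a linear map $\mu: W_0(G,X)^* \to \mathbb R$ splits $\sigma^*$ iff $\mu \circ \sigma^* = \mathrm{id}_{\mathbb R}$, which is the single condition $\mu(\sigma)=1$. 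Since $\mathbb R$ carries the trivial $G$-action, $G$-equivariance of $\mu$ is exactly $G$-invariance of $\mu$ as an element of $W_0(G,X)^{**}$. So the existence of a $G$-equivariant splitting is equivalent to the existence of a $G$-invariant $\mu \in W_0(G,X)^{**}$ with $\mu(\sigma)=1$.

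To finish, I would invoke \cite[Definition 13]{bnnw}: such a $\mu$ is by definition a topological invariant mean for the action $G \curvearrowright X$, and by Anantharaman-Delaroche and Renault \cite{AD-R} this is equivalent to topological amenability. The main (and really the only) point that needs care is matching the normalization and invariance conditions used here with the formulation of invariant mean in \cite{bnnw}; but this is a bookkeeping check rather than a genuine obstacle, since both formulations describe the same object in $W_0(G,X)^{**}$.
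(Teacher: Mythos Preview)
Your proposal is correct and follows exactly the same approach as the paper: indeed, the paper's argument is the paragraph immediately preceding the lemma, which you explicitly acknowledge and merely expand with a few routine verifications. Both reduce the existence of a $G$-equivariant splitting of $\sigma^*$ to the existence of a $G$-invariant $\mu\in W_0(G,X)^{**}$ with $\mu(\sigma)=1$, and then identify this with an invariant mean for the action as in \cite[Definition~13]{bnnw}.
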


Applying this lemma to the long exact sequence in group homology arising from the short exact sequence above we obtain:

\begin{corollary}\label{longexactcoeffs}
If the group $G$ acts topologically amenably on the compact Hausdorff space $X$, then for each $n$ there is a short exact sequence

\[
0\rightarrow H_n(G, \mathbb R)\rightarrow H_n(G, W_0(G,X)^*)\rightarrow H_n(G, N_0(G, X)^*)\rightarrow 0,
\]
mapping the fundamental class $[1]\in H_0(G, \R)$ to the fundamental class $\GX$ of the action. This gives us an isomorphism 

\[
H_n^{uf}(G\curvearrowright X)\cong H_n(G,  \mathbb R) \oplus H_n(G, N_0(G,X)^*).
\]
\end{corollary}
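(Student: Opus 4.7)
The plan is to deduce the corollary directly from the preceding lemma, which is the only place topological amenability enters. Amenability supplies a $G$-equivariant retraction $\mu \colon W_0(G,X)^* \to \R$ of $\sigma^*$, so that $\mu \circ \sigma^* = \mathrm{id}_\R$. Equivalently, the short exact sequence of coefficient modules
\[
0 \to \R \xrightarrow{\sigma^*} W_0(G,X)^* \to N_0(G,X)^* \to 0
\]
is split in the category of $G$-modules.

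Next I would feed this split short exact sequence into the long exact sequence of group homology, obtaining
\[
\cdots \to H_{n+1}(G, N_0(G,X)^*) \xrightarrow{\partial} H_n(G,\R) \xrightarrow{H_n(\sigma^*)} H_n(G, W_0(G,X)^*) \to H_n(G, N_0(G,X)^*) \xrightarrow{\partial} \cdots
\]
By functoriality of $H_n(G,-)$, the retraction $\mu$ induces a map $H_n(\mu)$ satisfying $H_n(\mu) \circ H_n(\sigma^*) = \mathrm{id}$. Hence $H_n(\sigma^*)$ is split injective for every $n$, which forces every connecting homomorphism $\partial$ to vanish by exactness. The long exact sequence therefore breaks into the asserted short exact sequences, and each such sequence inherits a splitting from $H_n(\mu)$, yielding the direct sum decomposition $H_n^{uf}(G \curvearrowright X) \cong H_n(G,\R) \oplus H_n(G, N_0(G,X)^*)$.

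Finally I would identify the image of the fundamental class. The coefficient module $\R$ carries the trivial $G$-action, so $H_0(G,\R) \cong \R$ with generator represented by $1$. The map $H_0(G,\sigma^*)$ is induced on coinvariants by $\sigma^*$, and since $\sigma^*(1) = \sigma$ by construction, the generator $[1]$ maps to $[\sigma] = \GX$ by the definition of the fundamental class of the action. The main obstacle here is really just bookkeeping: one needs to verify that the equivariant splitting at coefficient level genuinely propagates through the functorial long exact sequence so that all connecting maps vanish, but this is a formal consequence of exactness once the retraction has been produced by the preceding lemma.
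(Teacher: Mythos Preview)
Your argument is correct and matches the paper's approach exactly: the paper simply says ``Applying this lemma to the long exact sequence in group homology arising from the short exact sequence above,'' and your proposal spells out precisely that deduction. In particular, your identification of $[1] \mapsto \GX$ via $\sigma^*(1)=\sigma$ is just what the paper intends.
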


In Theorem \ref{amenable} we characterize topological amenability in terms of the $0$-dimensional homology. In particular when the action is not topologically amenable we will show (Corollary \ref{decomp}) that $H_0^{uf}(G\curvearrowright X)$ is isomorphic to $H_0(G, N_0(G,X)^*)$.
 
\section{Non-vanishing elements in $H_0^{uf}(G\curvearrowright X)$}

Unlike the Block-Weinberger case, vanishing of the fundamental class does not in general imply the vanishing of ${H}^{uf}_0(G \curvearrowright X)$.

\begin{theorem}
\label{properaction}
Let $X$ be a compact $G$ space containing an open $G$-invariant subspace $U$ on which $G$ acts properly. Then ${H}^{uf}_0(G \curvearrowright X)$ is non-zero.
In particular ${H}^{uf}_0(G \curvearrowright \overline{G})$ is non-zero for any compactification $\overline{G}$ of $G$.
\end{theorem}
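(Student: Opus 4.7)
My plan is to produce a non-zero $G$-invariant element $\zeta \in W_0(G,X)$, from which the conclusion will follow quickly by Hahn--Banach and duality: the image of $\zeta$ under the canonical embedding $W_0(G,X) \hookrightarrow W_0(G,X)^{**}$ gives a non-zero $G$-invariant bounded functional $\psi \mapsto \psi(\zeta)$ on $W_0(G,X)^{*}$, and since this functional vanishes on every coboundary $g\psi - \psi$, it descends to a non-zero linear functional on $H_0(G, W_0(G,X)^{*}) = {H}^{uf}_0(G \curvearrowright X)$.

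To build $\zeta$ I will pick $x_0 \in U$ with (finite) stabilizer $H \leq G$ and use properness plus local compactness of $U$ to find an open neighborhood $V$ of $x_0$ with $\overline{V}$ compact in $U$ and $gV \cap V = \emptyset$ for every $g \notin H$. Choosing a bump $f \in C(X)$ with $\supp(f) \subseteq V$, $f(x_0) = 1$, $0 \leq f \leq 1$, and any $g_0 \in G \setminus H$ (so that $g_0 V \cap V = \emptyset$ automatically forces $f(g_0 x_0) = 0$), I will take
\[
\eta \in N_0(G,X) \subseteq W_0(G,X), \qquad \eta_e = f, \quad \eta_{g_0} = -f, \quad \eta_g = 0 \text{ otherwise},
\]
and then set $\zeta = \sum_{g \in G} g \cdot \eta$.

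A direct calculation using $(g \cdot \xi)_h(x) = \xi_{g^{-1}h}(g^{-1}x)$ yields the closed form $\zeta_h(x) = f(h^{-1}x) - f(g_0 h^{-1}x)$, since only $g = h$ and $g = h g_0^{-1}$ contribute. Properness bounds the number of non-zero components of $\zeta(x)$ uniformly by $2|H|$, placing $\zeta$ in $C(X, \ell^1(G))$; the reindexing $k = h g_0^{-1}$ identifies $\sum_h f(g_0 h^{-1}x)$ with $\sum_k f(k^{-1}x)$, so $\sigma(\zeta) = 0$ and $\zeta \in W_0(G,X)$; reindexing $\sum_g g \cdot \eta$ gives $h \cdot \zeta = \zeta$; and $\zeta_e(x_0) = f(x_0) - f(g_0 x_0) = 1$ gives $\zeta \neq 0$. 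The \emph{in particular} statement then follows by taking $U = G$ inside any compactification $\overline{G}$, on which $G$ acts freely (and hence properly) by left translation.

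The step I expect to be the main obstacle is ensuring that the formal sum $\zeta$ actually lies in $W_0(G,X)$: a more naive candidate like $\sum_g g \cdot (f \otimes \delta_e)$ lies in $C(X, \ell^1(G))$ but has non-constant $\sigma$-image, so is not in $W_0$. Using the $N_0$-valued element $\eta$ with cancelling components $f$ and $-f$ circumvents this by forcing $\sigma(\zeta) = 0$ identically, while the disjointness of the translates $\{gV\}_{g \notin H}$ (from properness and Hausdorffness) keeps the pointwise $\ell^1$-norms bounded.
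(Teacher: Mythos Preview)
Your argument has a genuine gap: the element $\zeta=\sum_{g\in G} g\cdot\eta$ need not lie in $W_0(G,X)$, because $x\mapsto\zeta(x)$ need not be \emph{continuous} as a map $X\to\ell^1(G)$. Knowing that each coordinate $\zeta_h$ lies in $C(X)$ and that $\sup_x\|\zeta(x)\|_{\ell^1}\le 2|H|$ gives only a bounded, coordinatewise-continuous function; it does not give norm continuity into $\ell^1(G)$. The failure occurs at points of $X\setminus U$: as $x_n\in U$ approaches such a boundary point, the finitely many indices $h$ on which $\zeta(x_n)$ is supported drift to infinity in $G$, so $\zeta(x_n)$ cannot converge in $\ell^1$-norm. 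For a concrete instance take $G=\ZZ$ acting on its two-point compactification $X=\{-\infty\}\cup\ZZ\cup\{+\infty\}$, with $U=\ZZ$, $x_0=0$, $V=\{0\}$, $g_0=1$, $f=1_{\{0\}}$. Your formula gives $\zeta(n)=\delta_n-\delta_{n+1}$ for $n\in\ZZ$ and $\zeta(\pm\infty)=0$, so $\|\zeta(n)-\zeta(\pm\infty)\|_{\ell^1}=2\not\to 0$ and $\zeta\notin C(X,\ell^1(G))$. In fact the same computation shows that in this example there is \emph{no} non-zero $G$-invariant element of $W_0(\ZZ,X)$ at all, so the strategy of exhibiting such an element in $W_0$ itself cannot succeed.

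This is precisely why the paper works one dual space higher. It forms the \emph{finite} weighted averages $\xi^n=\sum_k \phi_n(k)\,k\cdot\xi$, each of which genuinely lies in $W_0(G,X)$, and uses the F{\o}lner-type weights $\phi_n$ to force $\|\delta\xi^n\|\to 0$ while keeping $\|\xi^n\|$ bounded. A weak-$*$ limit point then yields a $\delta^{**}$-cocycle $\zeta\in W_0(G,X)^{**}$, and the evaluation functional $\ev_{e,x_0}\in W_0(G,X)^*$ (rather than a Hahn--Banach functional on the dual) is the class shown to be non-zero by pairing with $[\zeta]$. Your idea of ``sum the $G$-translates of $\eta$ and pair'' is morally what is happening, but the sum has to be realised as a weak-$*$ limit in the bidual, not as an honest element of the predual.
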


\begin{proof}
If $G$ is finite, and the action of $G$ on $X$ is trivial, then ${H}^{uf}_0(G \curvearrowright X)=W_0(G,X)^*$ which is non-zero.

Otherwise we may assume that the action of $G$ on $U$ is non-trivial, replacing $U$ with $X$ if $G$ is finite. Thus we may pick a point $x_0$ in $U$, and $x_1=g_1x_0$ in $Gx_0$ with $x_0\neq x_1$. Let $f\in C(X)$ be a positive function of norm 1, with $f(x_0)=1$ and with the support $K$ of $f$ contained in $U\setminus\{x_1\}$. By construction $x_0\notin g_1^{-1}K$.

Define $\xi\in W_0(G,X)$ by $\xi_e=f, \xi_{g_1}=-f,$ and $\xi_{g}=0$ for $g\neq e,g_1$. We note that $\xi$ is in $W_0(G,X)$ as required, indeed it is in $N_0(G,X)$, since $\sum_{g\in G}\xi_g$ is identically zero. We now form the sequence
$$\xi^n=\sum_{k\in G}\phi_n(k) k\cdot \xi,\text{ where }\phi_n(k)=\max\left\{\frac{n-d(e,k)}{n},0\right\}.$$
If $\xi^n_g(x)$ is non-zero then $x$ is in $gK$ or $gg_1^{-1}K$. By properness of the action there are only finitely many $h\in G$ such that $hK$ meets $K$. Let $N$ be the number of such $h$. If $x\in hK$, then $x\in gK \cup gg_1^{-1}K$ for at most $2N$ values of $g$, hence for each $x\in X$, the set of $g$ with $\xi^n_g(x)\neq 0$ has cardinality at most $2N$. Since $|\xi^n_g(x)|\leq 2$ for each $g, n, x$ it follows that $\|\xi^n\|\leq 4N$ for all $n$.

For $s\in S$ consider
$$\xi^n-s\cdot\xi^n=\sum_{g\in G}\phi_n(g) (g\cdot \xi-sg\cdot \xi)=\sum_{g\in G}(\phi_n(g)-\phi_n(s^{-1}g)) g\cdot \xi.$$
Since $|(g\cdot\xi)_h(x)|\leq 1$ for all $x$ and $|\phi_n(g)-\phi_n(s^{-1}g)|\leq \frac 1n$ it follows that $|(\xi^n-s\cdot\xi^n)_h(x)|\leq \frac 1n$ for all $h,x$. On the other hand, for a given $x$, $(\xi^n-s\cdot\xi^n)_h(x)$ is non-zero for at most $4N$ values of $h$, hence $\|\xi^n-s\cdot\xi^n\|\leq \frac {4N}{n}$. We thus have a sequence $\xi^n$ in $W_0(G,X)$ with $\|\delta \xi^n\|\to 0$. It follows that if $\zeta$ is a weak-* limit point of $\xi^n$ in $W_0(G,X)^{**}$ then $\delta^{**}\zeta=0$, so $\zeta$ is a cocycle defining a class $[\zeta]$ in $H^0(G,W_0(G,X)^{**})$.

Let $\ev_{e,x_0}\in W_0(G,X)^*$ be the evaluation functional $\eta\mapsto \eta_e(x_0)$, and consider the homology class $[\ev_{e,x_0}]\in H^{uf}_0(G \curvearrowright X)$. We have
$$\ev_{e,x_0}(\xi^n)=\xi^n_e(x_0)=\phi_n(e) (e\cdot\xi)_{e}(x_0)+\phi_n(g_1^{-1}) (g_1^{-1}\cdot\xi)_e(x_0)$$
since the other terms in the sum vanish. The first term is $\phi_n(e)f(x_0)=1$, while $(g_1^{-1}\cdot\xi)_e(x_0)=(g_1^{-1}*\xi_{g_1})(x_0)=0$ since $x_0$ is not in $g_1^{-1}K$. Thus $\ev_{e,x_0}(\xi^n)=1$ for all $n$. It follows that the pairing of $[\ev_{e,x_0}]$ with $[\zeta]$ is 1, hence $[\ev_{e,x_0}]$ is a non-trivial element of $H^{uf}_0(G \curvearrowright X)$.
\end{proof}

We remark that there is a surjection from $H^{uf}_0(G \curvearrowright X)$ onto $H_0(G,N_0(G,X)^*)$, induced by the surjection $W_0(G,X)^*\to N_0(G,X)^*$, and the non-trivial elements constructed in the proposition remain non-trivial after applying this map.

\section{Characterizing amenability}

We recall the definition of a (topologically) amenable action.

\begin{definition}\label{action} Let $G$ be a group acting by homeomorphisms on a compact Hausdorff space. The action of $G$ on $X$ is said to be \emph{topologically amenable} if 
 there exists a sequence of 
elements $\xi^n\in W_{00}(G,X)$ such that  
\begin{enumerate}
\item $\xi^n_g\ge0$ in $C(X)$ for every $n\in \N$ and $g\in G$, 
\item $\sigma(\xi^n)=1$ for every $n$,
\item $\sup_{s\in S}\Vert \xi^n-s\cdot \xi^n\Vert\to 0$.
\end{enumerate}
\end{definition}

Universality of the Stone-\v Cech compactification leads to the observation that a group acts amenably on some compact space if and only if it acts amenably on $\beta G$, which is equivalent to exactness. Amenable actions on compact spaces (lying between the point and $\beta G$) form a spectrum of generalized amenability properties interpolating between amenability and exactness. We will return to this point later.

Now consider the coboundary map

\[
\begin{CD}
W_0(G,X) @>\delta>>\left(\displaystyle{\bigoplus_{s\in S}}W_0(G,X)\right)_{\infty},
\end{CD}
\]
where 
$$(\delta\xi)_s=\xi-s\cdot \xi,$$
for $\xi\in W_0(G,X)$, where the (finite) direct sum is equipped with a supremum norm. 
The operator $\delta$ is clearly bounded.
Since $S$ is finite the dual of $\delta$ is
\[
\begin{CD}
W_0(G,X)^*@>\delta^*>>\left(\displaystyle{\bigoplus_{s\in S}}W_0(G,X)^*\right)_{1},
\end{CD}
\]
where the direct sum is equipped with an $\ell^1$-norm and the adjoint map is given by
$$\delta^*\psi=\sum_{s\in S} \psi_s-s^{-1}\cdot \psi_s.$$
The functional $\sigma$ can be used to detect amenability of the action.

\begin{theorem}\label{theorem :  characterization by sigma in the image}
Let $G$ be a finitely generated group acting on a compact space $X$ by homeomorphisms. 
The following conditions are equivalent:
\begin{enumerate}
\item the action of $G$ on $X$ is topologically amenable,
\item $\sigma\notin\overline{\operatorname{Image}(\delta^*)}^{\Vert \cdot \Vert}$,
\item $\sigma\notin \operatorname{Image}(\delta^*)$,
\end{enumerate}
\end{theorem}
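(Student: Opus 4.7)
The plan is to prove the cycle of implications $(1) \Rightarrow (2) \Rightarrow (3) \Rightarrow (1)$. The first two implications are direct; $(3) \Rightarrow (1)$ is the main obstacle and amounts to upgrading a norm-closure statement to honest membership in $\operatorname{Image}(\delta^*)$.

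For $(1) \Rightarrow (2)$, I would exploit the amenable sequence. Let $\{\xi^n\} \subset W_{00}(G,X)$ witness topological amenability as in Definition \ref{action}. Since $\xi^n_g \geq 0$ and $\sigma(\xi^n) = 1$, we have $\|\xi^n\| = \sup_x \sum_g \xi^n_g(x) = 1$. Suppose, toward a contradiction, that $\sigma \in \overline{\operatorname{Image}(\delta^*)}^{\|\cdot\|}$; fix $\epsilon \in (0,1)$ and pick $\psi \in \left(\bigoplus_{s \in S} W_0(G,X)^*\right)_1$ with $\|\sigma - \delta^*\psi\| < \epsilon$. Then for every $n$,
\[
|1 - \psi(\delta\xi^n)| = |\sigma(\xi^n) - (\delta^*\psi)(\xi^n)| \leq \epsilon,
\]
while $|\psi(\delta\xi^n)| \leq \|\psi\| \cdot \|\delta\xi^n\| \to 0$ by condition (3) of Definition \ref{action}. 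Passing to the limit forces $1 \leq \epsilon$, a contradiction. The step $(2) \Rightarrow (3)$ is immediate from $\operatorname{Image}(\delta^*) \subseteq \overline{\operatorname{Image}(\delta^*)}^{\|\cdot\|}$.

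For $(3) \Rightarrow (1)$ I would argue the contrapositive: assume the action is not amenable and produce $\sigma$ as an honest coboundary. By the Lemma above, non-amenability is equivalent to the non-existence of a $G$-invariant $\mu \in W_0(G,X)^{**}$ with $\mu(\sigma) = 1$. I would apply Hahn-Banach separation to the convex set $\delta(C) \subset \left(\bigoplus_{s \in S} W_0(G,X)\right)_\infty$, where $C = \{\xi \in W_0(G,X) : \xi \geq 0, \ \sigma(\xi) = 1\}$ and $C\cap W_{00}$ is norm-dense in $C$ (by truncating supports and renormalising). Non-amenability forces $0 \notin \overline{\delta(C)}^{\|\cdot\|}$, yielding some $\ell \in \left(\bigoplus_{s \in S} W_0(G,X)^*\right)_1$ and $\alpha > 0$ with $(\delta^*\ell)(\xi) \geq \alpha\sigma(\xi)$ for every $\xi \geq 0$. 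Thus $\phi := \delta^*\ell - \alpha\sigma \in W_0(G,X)^*$ is a positive linear functional.

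The main obstacle is to upgrade this to $\sigma \in \operatorname{Image}(\delta^*)$ itself, by showing that the positive functional $\phi$ is an honest coboundary $\phi = \delta^*\zeta$; once this is available one obtains $\sigma = \alpha^{-1}\delta^*(\ell - \zeta) \in \operatorname{Image}(\delta^*)$. I expect this step to amount to a Tarski-style paradoxical decomposition statement for the action: interpret $\ell$ combinatorially as a flow on $X \times G$, and use non-amenability to rearrange the flow so that its surplus positive part is itself the divergence of a compensating flow. This is by some margin the most delicate point of the proof and is precisely what forces conditions (2) and (3) to coincide, despite condition (3) being \emph{a priori} logically weaker than condition (2).
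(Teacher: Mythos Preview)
Your arguments for $(1)\Rightarrow(2)$ and $(2)\Rightarrow(3)$ are correct and essentially match the paper's (the paper packages the first implication via a weak-$*$ limit point $\mu\in W_0(G,X)^{**}$ of the $\xi^n$, but the content is the same).

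The problem is your $(3)\Rightarrow(1)$. You argue the contrapositive, separate $0$ from $\delta(C)$ by Hahn--Banach, and obtain $(\delta^*\ell)(\xi)\geq\alpha\,\sigma(\xi)$ only on the \emph{positive} cone. You then correctly identify that this leaves you needing to show the positive functional $\phi=\delta^*\ell-\alpha\sigma$ is itself a coboundary, and you speculate this requires a Tarski-type paradoxical decomposition. That obstacle is real for the route you have chosen, and it is not clear how to resolve it; there is no reason a priori why an arbitrary positive functional on $W_0(G,X)$ should lie in $\operatorname{Image}(\delta^*)$, and your argument gives no extra leverage on $\phi$.

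The paper avoids this entirely by running the implication in the forward direction rather than the contrapositive, and by using Hahn--Banach \emph{extension} rather than separation. Suppose $(3)$ holds. If there were a constant $D>0$ with $\|\delta\xi\|\geq D\,|\sigma(\xi)|$ for \emph{all} $\xi\in W_0(G,X)$, then the linear map $\psi:\delta\xi\mapsto\sigma(\xi)$ on $\operatorname{Image}(\delta)$ would be well defined and bounded; a Hahn--Banach extension $\Psi$ to $\bigoplus_{s\in S}W_0(G,X)$ would then satisfy $\delta^*\Psi=\sigma$, contradicting $(3)$. Hence no such $D$ exists, and one obtains $\xi^n\in W_0(G,X)$ with $\sigma(\xi^n)=1$ and $\|\delta\xi^n\|\to 0$. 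Passing to $W_{00}$ by density and then applying the standard normalisation (replace $\xi^n$ by $|\xi^n|$, note $\sigma(|\xi^n|)\geq 1$ pointwise, divide through, and check almost-invariance survives) yields an amenability sequence. The point is that the inequality one needs is for all $\xi$, not just positive $\xi$, and this comes for free from the extension viewpoint; there is no Tarski-style step anywhere, and the coincidence of $(2)$ and $(3)$ is a soft consequence of the cycle $(1)\Rightarrow(2)\Rightarrow(3)\Rightarrow(1)$ rather than a hard combinatorial fact.
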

\begin{proof}

$(1)\implies (2)$. Assume first that the action is amenable. Take $\mu$ to be the weak-* limit of 
a convergent subnet of $\xi_{\beta}$
as in the definition of amenable actions. Then 
$$\mu(\sigma)=\lim_{\beta} \sigma( \xi_{\beta}) =1,$$
and in particular $\sigma$ is not in the kernel of $\mu$. On the other hand
$$|\mu(\delta^*\psi)|=\lim_{\beta} |\delta^*\psi (\xi_{\beta})|=\lim_{\beta} |\psi(\delta \xi_{\beta})|\le 
\lim_{\beta}\left(\Vert \psi\Vert\ \sup_{s\in S}\Vert \xi_{\beta}-s\cdot \xi_{\beta}\Vert\right)=0,$$
for every $\psi\in \bigoplus_{s\in S} W_0(G,X)^*$.
Thus 
$$\operatorname{Image}(\delta^*)\subseteq \ker \mu.$$
Since $\ker \mu$ is norm-closed, we conclude
$$\overline{\operatorname{Image}(\delta^*)}^{\Vert \cdot \Vert}\subseteq \ker \mu.$$
Thus $\sigma \notin \overline{\operatorname{Image}(\delta*)}^{\Vert \cdot \Vert}$ and 
(2) follows.

$(2)\implies (3)$ is obvious.

To prove $(3)\implies (1)$ we suppose there exists a constant $D>0$ such that 
\begin{flalign*}
(\dagger)&&\Vert \delta \xi\Vert &\ge D \vert \sigma(\xi)\vert&&
\end{flalign*}
for all $\xi$, and seek a contradiction. Consider a functional $\psi:\delta(W_0(G,X))\to \RR$, defined by
$$\psi(\delta\xi)=\sigma(\xi).$$
This is well defined, since $\delta: W_0(G,X)\to \bigoplus_{s\in S} W_0(G,X)$ is injective.
By inequality ($\dagger$), $\psi$ is continuous on $\delta(W_0(G,X))$
and, by the Hahn-Banach theorem, we can extend it to a continuous functional $\Psi$
on $\bigoplus_{s\in S} W_0(G,X)$. By definition, for $\xi\in  W_0(G,X)$ we have
$$
[\delta^*(\Psi)](\xi)=\Psi(\delta\xi)=\psi(\delta\xi)=\sigma(\xi),$$
hence $\sigma$ is in the image of $\delta^*$, contradicting (3).

It follows that there is no $D>0$ such that inequality ($\dagger$) holds for all $\xi\in W_0(G,X)$, hence there exists a sequence $\xi^n\in W_0(G,X)$ such that $\sigma(\xi^n)=1$ for all $n$, and $\Vert\delta \xi^n\Vert\to 0$. Since $W_{00}(G,X)$ is dense in $W_0(G,X)$, we may assume without loss of generality that $\xi^n\in W_{00}(G,X)$, and applying the standard normalization argument we deduce that the action is amenable.
\end{proof}

We are now in the position to prove the main theorem, which is stated here in a more general form. 
The \emph{reduced homology} $\overline{H}^{uf}_n(G \curvearrowright X) =\overline{H}_n(G,W_0(G,X)^*)$ 
in the statement is defined, as in the context of $L^2$-(co)homology, by taking the closure of the images in the chain complex.

\begin{theorem}\label{amenable}
Let $G$ be a finitely generated group acting by homeomorphisms on a compact space $X$.
The following conditions are equivalent
\begin{enumerate}
\item the action of $G$ on $X$ is topologically amenable,
\item $\GX\neq 0$ in $\overline{H}^{uf}_0(G \curvearrowright X)$,
\item \label{vanishingclass} $\GX\neq 0$ in ${H}^{uf}_0(G \curvearrowright X)$,
\item the map $(i^*)_*:H_0^{uf}(G\curvearrowright X)\to H_0(G, N_0(G,X)^*)$ is not injective,
\item the map $(i^*)_*:H_1^{uf}(G\curvearrowright X)\to H_1(G, N_0(G,X)^*)$ is surjective.
\end{enumerate}
\end{theorem}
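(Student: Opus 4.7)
The plan is to identify (1), (2), (3) as immediate consequences of Theorem \ref{theorem :  characterization by sigma in the image} and to derive (4) and (5) from the long exact sequence in group homology associated to the dual short exact sequence
\[
0 \to \R \xrightarrow{\sigma^*} W_0(G,X)^* \xrightarrow{i^*} N_0(G,X)^* \to 0.
\]

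For (1)$\iff$(2)$\iff$(3), I would begin by noting that because $S = S^{-1}$ generates $G$, for any $G$-module $M$ the coinvariants satisfy $H_0(G,M) = M/\operatorname{Image}(\delta^*)$ and $\overline{H}_0(G,M) = M/\overline{\operatorname{Image}(\delta^*)}$, where $\delta^*$ is the map defined in Section 3. Since $\GX = [\sigma]$ by definition, conditions (2) and (3) read literally as $\sigma\notin\overline{\operatorname{Image}(\delta^*)}$ and $\sigma\notin\operatorname{Image}(\delta^*)$, so Theorem \ref{theorem :  characterization by sigma in the image} yields these three equivalences immediately.

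For (3)$\iff$(4)$\iff$(5) I would first verify that the dual sequence above really is short exact as a sequence of $G$-modules: $\sigma^*$ is injective because $\sigma$ is surjective; $i^*$ is surjective by Hahn--Banach since $N_0(G,X)$ is a closed subspace of $W_0(G,X)$; and $\ker(i^*) = \operatorname{Image}(\sigma^*)$ because a functional on $W_0(G,X)$ annihilates $N_0(G,X)$ iff it factors through $W_0(G,X)/N_0(G,X)\cong\R$. Equivariance is inherited from $\sigma$ and $i$. The associated long exact sequence in group homology has tail
\[
H_1(G, W_0(G,X)^*) \xrightarrow{(i^*)_*} H_1(G, N_0(G,X)^*) \xrightarrow{\partial} H_0(G, \R) \xrightarrow{\sigma_*} H_0^{uf}(G \curvearrowright X) \xrightarrow{(i^*)_*} H_0(G, N_0(G,X)^*) \to 0,
\]
and the identification $\sigma^*(1) = \sigma$ makes $\sigma_*: H_0(G,\R)=\R \to H_0^{uf}(G\curvearrowright X)$ send $1$ to $\GX$. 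Exactness at $H_0^{uf}(G\curvearrowright X)$ shows the kernel of $(i^*)_*$ in degree zero is $\R\cdot\GX$, which is nonzero iff $\GX\neq 0$; this gives (3)$\iff$(4). Exactness at $H_1(G,N_0(G,X)^*)$ shows $(i^*)_*$ in degree one is surjective iff $\partial = 0$, iff $\sigma_*$ is injective on the one-dimensional space $\R$, iff $\GX\neq 0$; this gives (3)$\iff$(5).

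I do not anticipate a substantive obstacle; the analytic content is packaged in Theorem \ref{theorem :  characterization by sigma in the image}, and the remaining work is standard homological bookkeeping. The one point requiring care is to confirm that the dual sequence is genuinely short exact so that the long exact sequence applies, and to correctly identify the connecting homomorphism so that $\sigma_*(1)=\GX$.
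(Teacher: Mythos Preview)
Your proposal is correct and follows essentially the same route as the paper: the equivalence of (1), (2), (3) is reduced to Theorem~\ref{theorem :  characterization by sigma in the image} via the identification $H_0(G,M)=M/\operatorname{Image}(\delta^*)$ for finitely generated $G$, and (4), (5) are read off from the long exact sequence associated to the dual short exact sequence, using that $\sigma_*$ sends $1$ to $\GX$. Your explicit verification of exactness of the dual sequence (via Hahn--Banach) is a detail the paper handles earlier by noting that $\sigma$ is split as a vector-space map.
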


\begin{proof}
The equivalence 
(1)$\Longleftrightarrow$(2)$\Longleftrightarrow$(3) follows from 
Theorem  \ref{theorem :  characterization by sigma in the image}.
Indeed,  we have $H_0(G,M)=M_G$,  where $M_G$ is 
the coinvariant module, namely the quotient of $M$ by the
module generated by elements of the form $g\cdot m-m$. Since $G$ is finitely generated
it is enough to consider only sums of elements of the form $s\cdot m-m$, where $s$ are the generators.
Indeed, if $g=s_1s_2\dots s_n$ for $s_i\in S$, we can write 
$$g\cdot m-m=\left(\sum_{i=1}^{n-1} s_i\cdot m_i-m_i\right)+s_n\cdot m-m, $$
where $m_i=(s_{i+1}\dots s_n)\cdot m$ for $i\leq n$.
Hence $W_0(G,X)^*_G$ is exactly the quotient $W_0(G,X)^*$ by the image of 
$\delta^*$.

As in the proof of Corollary \ref{longexactcoeffs} the short exact sequence of coefficients yields a long exact sequence which terminates as

\[
\rightarrow H_0(G, \mathbb R^*)\xrightarrow{\sigma^*} H_0(G, W_0(G,X)^*)\xrightarrow{i^*} H_0(G, N_0(G, X)^*)\rightarrow 0,
\]
and in which the fundamental class $[1]\in  H_0(G, \mathbb R^*)$ maps to the class $\GX$. Thus $\GX\neq 0$ if and only if the map $\sigma^*$ is non-zero, or equivalently the kernel of $i^*$ is non-zero. Thus it follows that (3) is equivalent to (4).

Also by exactness
of the sequence $\GX\neq 0$ if and only if $[1]$ is not in the image of the connecting map, or equivalently the connecting map is zero, and we obtain the equivalence of (3) and (5).
\end{proof}

Combining this with Corollary \ref{longexactcoeffs} we obtain:

\begin{corollary}\label{decomp}
Let $G$ be a group acting by homeomorphisms on a compact Hausdorff topological space $X$. 

\[H_0^{uf}(G \curvearrowright X)\cong 
\begin{cases}
H_0(G, \mathbb R)\oplus H_0(G, N_0(G,X)^*) &\text{ when  the action is amenable,}\\
H_0(G, N_0(G,X)^*) & \text{ when  the action is not amenable.}\\
\end{cases}
\]

\end{corollary}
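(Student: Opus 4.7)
The plan is to split the statement by the amenability dichotomy and then assemble the two cases directly from Corollary~\ref{longexactcoeffs} and Theorem~\ref{amenable}. The underlying input in both cases is the long exact sequence in group homology associated to the short exact sequence of coefficients
\[
0\to \R \xrightarrow{\sigma^*} W_0(G,X)^* \xrightarrow{i^*} N_0(G,X)^* \to 0,
\]
so the first thing I would do is write down its tail
\[
H_1(G,N_0(G,X)^*) \xrightarrow{\partial} H_0(G,\R) \xrightarrow{\sigma^*_*} H_0^{uf}(G\curvearrowright X) \xrightarrow{(i^*)_*} H_0(G,N_0(G,X)^*) \to 0,
\]
and note that, irrespective of amenability, $(i^*)_*$ is surjective.

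For the amenable case I would just cite Corollary~\ref{longexactcoeffs} at $n=0$: a $G$-equivariant splitting of $\sigma^*$ splits the long exact sequence into short exact sequences, giving
\[
H_0^{uf}(G\curvearrowright X)\cong H_0(G,\R)\oplus H_0(G,N_0(G,X)^*).
\]
For the non-amenable case I would invoke Theorem~\ref{amenable}, specifically the equivalence of (1) and (4): the map $(i^*)_*\colon H_0^{uf}(G\curvearrowright X)\to H_0(G,N_0(G,X)^*)$ fails to be injective if and only if the action is topologically amenable. Hence when the action is not amenable, $(i^*)_*$ is injective, and combined with the surjectivity already noted this makes $(i^*)_*$ an isomorphism, giving
\[
H_0^{uf}(G\curvearrowright X)\cong H_0(G,N_0(G,X)^*).
\]

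There is really no obstacle here; the corollary is a direct bookkeeping consequence of the two preceding results, applied to the two ends of the same long exact sequence. The only thing I would be careful about is making sure that the ``fundamental class'' maps used in Theorem~\ref{amenable}(4) and in Corollary~\ref{longexactcoeffs} are the same map $(i^*)_*$ in $H_0$, which is immediate since both arise from applying $H_*(G,-)$ to the same short exact sequence of coefficients.
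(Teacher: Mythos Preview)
Your proposal is correct and is essentially the paper's own argument: the paper states the corollary immediately after Theorem~\ref{amenable} with the phrase ``Combining this with Corollary~\ref{longexactcoeffs} we obtain,'' and you have simply unpacked that combination---citing Corollary~\ref{longexactcoeffs} for the amenable case and using the equivalence (1)$\Leftrightarrow$(4) of Theorem~\ref{amenable} together with the surjectivity of $(i^*)_*$ for the non-amenable case.
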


\section{Functoriality}

We return to the remark that we made earlier that the actions of $G$ on compact spaces form a spectrum, with the single point at one end of the spectrum and the Stone-\v Cech compatification of $G$ at the other end. We can make sense of this statement homologically as follows.

Suppose that $G$ is a finitely generated group acting by homeomorphisms on two compact spaces $X, Y$. Given a continuous, equivariant map $X \rightarrow Y$ of compact $G$-spaces we obtain induced continuous maps $f^*: C(Y,\ell^1(G))\to C(X,\ell^1(G))$ and $f^*: C(Y)\to C(X)$ defined by $f^*(\xi)=\xi\circ f$. Let $\sigma_X:C(X,\ell^1(G))\to C(X),\sigma_Y:C(Y,\ell^1(G))\to C(Y)$ denote the summation maps. Summation is compatible with the pull-backs in the sense that $\sigma_X\circ f^*=f^*\circ\sigma_Y$, hence $f^*$ restricts to maps $W_0(G,Y)\to W_0(G,X)$ and $N_0(G,Y)\to N_0(G,X)$. Note that equivariance of $f$ implies equivariance of $f^*$.

Let $\xi\in C(Y,\ell^1(G))$ we have
$$\norm{f^*\xi}=\sup_{x\in X}\sum_{g\in G} |\xi_g(f(x))|\leq \sup_{y\in Y}\sum_{g\in G} |\xi_g(y)|=\norm{\xi}$$
so when $f$ is surjective, we have equality, and $f^*$ is an isometry onto its image. Dualising the restriction of $f^*$ to $W_0(G,Y)\to W_0(G,X)$ we obtain a continuous linear map which we denote by $f_*:W_0(G,X)^*\rightarrow W_0(G, Y)^*$. Equivariance of this map follows from equivariance of $f^*$.

As the map $f_*$ is equivariant, it induces a map on group homology (also denoted $f_*$):
\[
f_*:H^{uf}_n(G \curvearrowright X)\to H^{uf}_n(G \curvearrowright Y)
\]

In the special case that $f$ is surjective, as $f^*$ is an isometry onto its image it follows that $f_*$ is surjective, so we obtain a short exact sequence of $G$-modules

\[
0\rightarrow W_0(G,f)^*\rightarrow W_0(G,X)^*\xrightarrow {f_*}W_0(G, Y)^*\rightarrow 0
\]
where $W_0(G,f)$ denotes the quotient space $W_0(G,X)/f^*W_0(G, Y)$.

This induces a long exact sequence in group homology from which we extract the following fragment.
\[
\cdots \rightarrow H^{uf}_0(G \curvearrowright X)\xrightarrow{f_*} H^{uf}_0(G \curvearrowright Y)\rightarrow 0.
\]
Thus surjectivity of $f$ implies surjectivity of the map $f_*$ on homology in dimension 0.

In general, whether $f$ is surjective or not, the fundamental class $\GY$ is in the image of $f_*$. Specifically we have $f_*\GX=\GY$ which follows from the identity $\sigma_X\circ f^*=f^*\circ\sigma_Y$.

It follows that if $\GY$ is non-trivial then so is $\GX$, recovering the statement that if the action on $Y$ is topologically amenable then so is the action on $X$.

Now suppose that $X$ is an arbitrary compact space on which $G$ acts by homeomorphisms so by universality there  are equivariant continuous  maps 

\[
\beta G\rightarrow X\rightarrow \pt.
\]
It follows that if $G$ is amenable then the action on $X$ is topologically amenable. On the other hand if the action on $X$ is topologically amenable then the action on $\beta G$ is also topologically amenable,  hence $G$ is exact. Hence we recover two well known facts.

Consider again the general situation of a continuous $G$-map $f:X\rightarrow Y$. We have seen that topological amenability automatically transfers from $Y$ to $X$, but in general it does not transfer in the opposite direction. In order to transfer it from $X$ to $Y$ we need to place additional constraints on the map $f$.

\begin{definition}
Let $G$ be a group and $X,Y$ be compact Hausdorff topological spaces on which $G$ acts by homeomorphisms. A continuous $G$-equivariant map $f:X\rightarrow Y$  induces a $G-C(Y)$-module structure on $C(X)$ by pullback. The map $f$ is said to be \emph{amenable} if there is a bounded $C(Y)$-linear $G$-equivariant map $\mu: C(X) \rightarrow C(Y)$ 
with $\mu({\bf 1}_X)={\bf 1}_Y$.
\end{definition}
Amenability of the map $f$ implies that $f$ is surjective, hence $f^*$ is topologically injective and $\mu$ is a splitting of $f^*$. 

When $G$ is the trivial group this reduces to the classical definition of an amenable map, while if $Y$ is a point then the map $X\rightarrow Y$ is amenable if and only if the action of $G$ on $X$ is co-amenable.

\begin{proposition}
Let $G$ be a group and $X,Y$ be compact Hausdorff topological spaces on which $G$ acts by homeomorphisms. Let $f:X\rightarrow Y$ be an amenable continuous  $G$-equivariant map. If the action of $G$ on $X$ is topologically amenable then so is the action of $G$ on $Y$.
\end{proposition}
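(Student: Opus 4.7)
The plan is to transport an amenability sequence for the action on $X$ to one for the action on $Y$, via the amenable map $\mu$ applied coordinatewise. Concretely, extend $\mu\colon C(X)\to C(Y)$ to a linear map $\tilde\mu\colon C(X,\ell^1(G))\to C(Y,\ell^1(G))$ on the dense subspace of elements with finite $g$-support by $(\tilde\mu\xi)_g:=\mu(\xi_g)$, and then apply $\tilde\mu$ to a sequence $\xi^n\in\woo(G,X)$ witnessing topological amenability of the action on $X$ as in Definition~\ref{action}.

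Three structural properties of $\tilde\mu$ need to be checked. First, $G$-equivariance, an immediate consequence of equivariance of $\mu$ via the computation
\[
(\tilde\mu(g\cdot\xi))_h = \mu(g*\xi_{g^{-1}h}) = g*\mu(\xi_{g^{-1}h}) = (g\cdot\tilde\mu\xi)_h.
\]
Second, compatibility with the augmentations, $\sigma_Y\circ\tilde\mu=\mu\circ\sigma_X$, which implies that $\tilde\mu$ sends $\woo(G,X)$ into $\woo(G,Y)$ and converts the normalisation $\sigma_X(\xi)=\mathbf{1}_X$ into $\sigma_Y(\tilde\mu\xi)=\mu(\mathbf{1}_X)=\mathbf{1}_Y$. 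Third, the contraction estimate $\norm{\tilde\mu\xi}_Y\le\norm{\xi}_X$, which uses that $\mu$ is a positive contraction, so $|\mu(f)|\le\mu(|f|)$, yielding
\[
\sum_g|\mu(\xi_g)(y)| \le \mu\Bigl(\sum_g|\xi_g|\Bigr)(y) \le \Bigl\|\sum_g|\xi_g|\Bigr\|_\infty = \norm{\xi}_X.
\]

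Granted these, the argument concludes quickly: setting $\eta^n:=\tilde\mu(\xi^n)\in\woo(G,Y)$, positivity of $\mu$ gives $\eta^n_g\ge 0$, the augmentation identity gives $\sigma_Y(\eta^n)=1$, and $G$-equivariance together with the contraction estimate give
\[
\sup_{s\in S}\norm{\eta^n-s\cdot\eta^n}_Y \le \sup_{s\in S}\norm{\xi^n-s\cdot\xi^n}_X \longrightarrow 0,
\]
verifying the three conditions of Definition~\ref{action} for the action on $Y$. The main obstacle is justifying the positivity and norm-one properties of $\mu$: the definition given in the paper only requires $\mu$ to be bounded, $C(Y)$-linear, unital and $G$-equivariant, and without additional input the coordinatewise extension $\tilde\mu$ need not even be bounded from $C(X,\ell^1(G))$ into $C(Y,\ell^1(G))$. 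The argument above implicitly invokes the standard observation that an amenable map in the Anantharaman--Delaroche sense corresponds to a weak-$*$ continuous $G$-equivariant map $Y\to\operatorname{Prob}(X)$, which makes $\mu$ automatically a positive contraction; absent this convention one must first replace $\mu$ by a positive unital representative with the same equivariance before running the coordinatewise transport.
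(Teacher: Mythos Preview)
Your approach differs substantially from the paper's: you transport an amenability sequence directly via Definition~\ref{action}, whereas the paper argues through the homological characterisation of Theorem~\ref{amenable}. The paper identifies $C(X,\ell^1(G))$ with the completed injective tensor product $C(X)\,\widehat\otimes_\epsilon\,\ell^1(G)$, so that the coordinatewise map you call $\tilde\mu$ is precisely $\mu\otimes_\epsilon 1$; \emph{boundedness of this map then follows from boundedness of $\mu$ alone, by functoriality of the injective tensor norm}, with no appeal to positivity. Having a bounded $G$-equivariant map $W_0(G,X)\to W_0(G,Y)$ intertwining the augmentations, the paper dualises, obtains an induced map $\mu^*:H^{uf}_0(G\curvearrowright Y)\to H^{uf}_0(G\curvearrowright X)$ splitting $f_*$, checks that $\mu^*\GY=\GX$ from $\mu(\mathbf 1_X)=\mathbf 1_Y$, and finishes by invoking Theorem~\ref{amenable}.

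The gap you yourself flag is genuine and is not closed by the remedy you propose. Under the paper's definition, $\mu$ is only bounded, $C(Y)$-linear, $G$-equivariant and unital; positivity is neither assumed nor automatic (for $G$ trivial, $X=\{a,b\}$, $Y=\{*\}$, the map $\mu(\psi)=2\psi(a)-\psi(b)$ is unital, $C(Y)$-linear and bounded, but not positive). Without positivity your contraction estimate $\sum_g|\mu(\xi_g)(y)|\le\|\xi\|_X$ fails, $\tilde\mu$ is not known to be bounded for the sup-$\ell^1$ norm, and condition~(1) of Definition~\ref{action} is not verified for $\eta^n$. Your suggested fix---replacing $\mu$ by a positive $G$-equivariant unital $C(Y)$-linear map---is asserted rather than proved, and producing such a replacement in general is not obviously easier than the proposition itself. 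The paper's route avoids the issue entirely: the injective tensor product supplies boundedness, and the homological argument never needs $\tilde\mu$ to preserve positivity, only to be bounded, equivariant, and compatible with $\sigma$.
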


\begin{proof}
We use the isomorphism between the space  $C(X,\ell^1(G))$ and the completed injective tensor product $ C(X)\, \widehat{\otimes}_\epsilon \,\ell^1(G)$
(see, e.g., \cite[Theorem 44.1]{Treves}) to identify
 $W_0(G,X)$ as a subspace of $C(X)\,\widehat{\otimes}_\epsilon \,\ell^1(G)$ and $W_0(G,Y)$ as a subspace of $C(Y)\,\widehat{\otimes}_\epsilon \, \ell^1(G)$. Since $f$ is amenable we have a  $G$-equivariant splitting $\mu:C(X) \rightarrow C(Y)$ of the map $f^*$, giving a map $\mu\otimes_\epsilon 1:C(X)\,\widehat{\otimes}_\epsilon \, \ell^1(G)\rightarrow C(Y)\,\widehat{\otimes}_\epsilon \, \ell^1(G)$. This restricts to a map $W_0(G, X)\rightarrow W_0(G,Y)$ since $\mu$ takes constant functions on $X$ to constant functions on $Y$. 
 
 The corresponding dual  map $W_0(G,Y)^*\rightarrow W_0(G,X)^*$ induces a map on homology that, abusing notation, 
 we will denote $\mu^*:H_0(G\curvearrowright Y)\rightarrow H_0(G\curvearrowright X)$. By construction this splits the map $f_*:H_0(G\curvearrowright X)\rightarrow H_0(G\curvearrowright Y)$, and since $\mu({\bf 1}_X )= {\bf 1}_Y$, $\mu^*(\GY)=\GX$. It follows that if the fundamental class $\GX$ is not trivial then neither is $\GY$, and so topological amenability of the action on $X$ implies topological amenability for the action on $Y$ as required.
\end{proof}

\section{The interaction between uniformly finite homology and bounded cohomology}\label{pairing}

We conclude with some remarks concerning the interaction of the uniformly finite homology of an action and the
bounded cohomology with coefficients introduced in \cite{bnnw}. These illuminate the special role played by the Johnson class in $H_b^1(G, N_0(G,X)^{**}$ and the fundamental class in $H_0^{uf}(G\curvearrowright X)$ and extend the results in \cite{bnw} which considered the special case of the action of $G$ on a point.

In \cite{bnnw} we showed that topological
amenability of the action is encoded by triviality of an element $[J]$ in $H^1_b(G, N_0(G,X)^{**})$, which we call the Johnson class for the action. This class is the image of the class $[1]\in H^0_b(G, \RR)$ under the connecting map arising from the short exact sequence of coefficients

\[ 0\rightarrow N_0(G,X)^{**}\rightarrow W_0(G,X)^{**}\rightarrow \RR\rightarrow 0
\]

\noindent which is dual to the short exact sequence appearing in the proof of Theorem \ref{amenable}. 

By applying the forgetful functor from bounded to ordinary cohomology, we obtain a pairing of $H^1_b(G, N_0(G,X)^{**})$ with $H_1(G, N_0(G,X)^{*})$, and clearly if the Johnson class $[J]$ is trivial then its pairing with any $[c]\in H_1(G,N_0^*)$ is zero.

Now suppose that every $[c]\in H_1(G,N_0(G,X)^*)$ pairs trivially with
the Johnson class. Since the Johnson class $[J]$ is obtained by applying the connecting map to the generator $[1]$ of $H^0_b(G, \RR)=\RR$, pairing $[J]$ with $[c]\in H_1(G,N_0(G,X)^*)$ is
the same as pairing $[1]$ with the image of $[c]$ under the connecting map in homology. As this pairing (between $H^0(G,\R)=H^0_b(G,\R)$ and $H_0(G,\R)$) is faithful, it follows that the image of $[c]$ under the connecting map is trivial for all $[c]$, so the connecting map is zero, which we have already noted is equivalent to amenability of the action. Thus in the case when the group is non-amenable, the non-triviality of the Johnson element must be detected by the pairing.

On the other hand, we can run a similar argument in the opposite
direction: if pairing $\GX$ with every element $[\phi]\in H^0_b(G,W_0(G,X)^
{**})$ we get zero, then since $\GX=(\sigma^*)_*[1]$, we have that the
pairing of $(\sigma^{**})_*[\phi]\in H^0_b(G,\R)$ with $[1]\in H_0(G,\R)$ is trivial, whence $(\sigma^{**})_*[\phi]=0$ (again by faithfulness of the pairing). Thus, by exactness, the connecting map on cohomology is injective and the Johnson class is non-trivial. So when the action is amenable, (and hence the Johnson class is trivial), non-triviality of $\GX$ must be detected by the pairing.

\bibliographystyle{plain}

\end{document}